   \theoremstyle{plain}
\newtheorem{theorem}{Theorem}[section]
\newtheorem{proposition}[theorem]{Proposition}
\newtheorem{corollary}[theorem]{Corollary}
	 \theoremstyle{definition}
\newtheorem{definition}[theorem]{Definition}
	 \theoremstyle{remark}
\newtheorem{remark}{Remark}
\newcommand\lle{\mathbin{*{\le}}}
\newcommand\rle{\mathbin{{\le}*}}
\newcommand\BH{\mathcal B(H)}
\newcommand\cle\preceq
\newcommand\cwedge\curlywedge
\newcommand\cvee\curlyvee
\newcommand\lcle{\mathbin{*{\cle}}}
\newcommand\rcle{\mathbin{{\cle}*}}
\newcommand\dg{^\dagger}
\newcommand\lt{^\backprime}
\newcommand\ltt{^{\backprime\backprime}}
\newcommand\rcvee{\mathop{{\cvee}\!*}}
\newcommand\rcwedge{\mathop{{\cwedge}\!*}}
\numberwithin{equation}{section}
\newcommand\colo{\colon\,}
\newcommand\ran{\mathop{\mathrm{ran}}}
\newcommand\rran[1]{\overline{\ran #1}}
\newcommand{\stminus}%{\mathrel{\raisebox{0ex}{$^*$}\hspace{-.62em}-}}
{\mathrel{^*}\hspace{-.9em}-} %64
\newcommand\thitem[1]{\item[\hspace*{-3ex}{\upshape (#1)}]}
\newcommand\pritem[1]{\item[\hspace*{5ex}(#1)]}
\title[On one-sided star partial orders on a Rickart *-ring]{On one-sided star partial orders \\ on a Rickart *-ring}
\author[J.\, C\={\i}rulis]{J\=anis C{\=\i}rulis}
\address{Institute of Mathematics and Computer Science \\ University of Latvia}
\email{jc@lanet.lv}
\subjclass{06A06; 16W10; 47A05}
\keywords{bounded linear operator; left-star order; right-star order; Rickart *-ring, *-regular ring}
\begin{document}

\begin{abstract}
We compare some recent approaches to extending the notions of left- and right-star partial order, introduced for complex matrices in early 90-ies, to bounded linear Hilbert space operators and to certain *-rings, and discuss in more detail a version of right-star partial order on a  Rickart *-ring $R$. Regularity of $R$ is not presumed.
\end{abstract}

\maketitle

\section{Introduction}	\label{intro}

One-sided (i.e., left- and right-) star orders for $m \times n$ matrices was introduced by Baksalary and Mitra in \cite{BM}  (see also \cite{MBM}) and have been intensively studied. The corresponding definitions are

\begin{eqnarray}
A \lle B &\text{iff}& A^*A = A^*B \text{ and Im}A \subseteq \text{Im}B, \label{M1}     \\
A \rle B &\text{iff}& AA^* = BA^* \text{ and Im}A^* \subseteq \text{Im}B^*. \label{M2}
\end{eqnarray}
Both orders have also been transferred to bounded linear Hilbert space operators. For example, the definitions assumed in \cite{DW} are direct analogues of those for the matrix case (to unify notations we borrow here, and in the sequel, those used in %our previous work
\cite{C}, Example 1: $\ran A$, $\rran A$ and $\ker A$ stand for the range, the closed range and the nullspace of an operator $A$, respectively):
\begin{eqnarray}
A \lle B &\text{iff}& A^*A = A^*B \text{ and } \ran A \subseteq \ran B, \label{D1}     \\
A \rle B &\text{iff}& AA^* = BA^* \text{ and } \ran A^* \subseteq \ran B^*. \label{D2}
\end{eqnarray}
These orders are not independent: evidently, $A \lle B$ if and only if $A^* \rle B^*$. It is observed in Theorem 2.2 of \cite{DW} that
$A \lle B$ if and only if there are invertible operators $E,F$ such that $EAF \rle EBF$. It is known well that range inclusion of operators in a Hilbert space can be characterized algebraically:
\[
\ran A \subseteq \ran B \text{ iff there is an operator } C \text{ such that } A = BC
\]
(see, e.g.\ \cite[Lemma 2.1]{DW}). Therefore, the definitions (\ref{D1}) and (\ref{D2}) can be given a form
\begin{eqnarray}
A \lle B &\text{iff}& A^*A = A^*B \text{ and } A = BC \text{ for some } C, \label{D1'}     \\
A \rle B &\text{iff}& AA^* = BA^* \text{ and } A = CB \text{ for some } C \label{D2'}
\end{eqnarray}
suitable for immediate transferring  to rings with involution.

In \cite{DGM}, the left-star order for operators is defined as follows
\begin{eqnarray}
A \lle B & \text{ iff }& \ran P = \rran{A}, \ker A = \ker Q, PA = PB, AQ = BQ \label{dmg1} \\[-1ex]
& & \mbox{for some appropriate projection operator $P$ and} \nonumber \\[-1ex]
& & \mbox{an idempotent operator $Q$} \nonumber;
\end{eqnarray}
it is then proved that the defined relation is a  partial order indeed and that this definition is equivalent to (\ref{D1}). The right-star order $\rle$ is introduced there similarly, by the same condition (\ref{dmg1}) with $P$ idempotent and $Q$ a projection. See also \cite{MRD}.

We note that still another extension of (\ref{M1}) and (\ref{M2}) to $\BH$, the set of bounded linear operators over a Hilbert space $H$, is possible, and introduce two other order relations, $\rcle$ and $\lcle$:
\begin{eqnarray}
A \lcle B &\text{iff}& A^*A = A^*B \text{ and } \rran A \subseteq \rran B, \label{C1}     \\
A \rcle B &\text{iff}& AA^* = BA^* \text{ and } \rran{ A^*} \subseteq \rran{ B^*}. \label{C2}
\end{eqnarray}
Generally, they are weaker than $\lle$ and $\rle$; however, the difference disappears if the underlaying Hilbert space is finite dimensional. In the infinite-dimensional case, an operator in  $\BH$ has a closed range if and only if it is regular (has the Moore-Penrose inverse); therefore, both versions of one-sided star orders coincide on regular operators.

From this point of view, both (\ref{D1}) and (\ref{C1}) are equally appropriate generalizations of the matrix ordering (\ref{M1}), and the same concerns also (\ref{D2}), (\ref{C2}) and (\ref{M2}). Notice that the defining conditions of $\lcle$ and $\rcle$ also can be rewritten purely in terms of operators, as the lattice of closed subspaces of $H$ is isomorphic to that of projection operators. Given an operator $C$, let as denote by $P_C$ the projection operator onto the closure of $\ran C$. Then
\begin{eqnarray}
A \lcle B &\text{iff}& A^*A = A^*B \text{ and } P_A \le P_B, \label{C1'}     \\
A \rcle B &\text{iff}& AA^* = BA^* \text{ and } P_{ A^*} \le P_{ B^*}, \label{C2'}
\end{eqnarray}
where $\le$ stands for the natural ordering of projection operators. This form of definitions allows us to transfer them naturally to Rickart *-rings. This transfer is the main purpose of the present paper.

The next section contains the necessary preliminary information, mainly from \cite{C}, on *-rings with certain additional structure. In Section \ref{ord}, we compare a few algebraic analogues, found in  the literature, of the definitions discussed in Introduction. For illustration, some elementary properties of the right-star order $\rcle$ in a Rickart *-ring are stated in the final section \ref{latt}.

\section{Preliminaries: regular and Rickart *-rings}   \label{prelim}

We shall deal only with associative rings. Recall that a ring is said to be \emph{regular} if every its principal right (equivalently, left) ideal is generated by an idempotent. A \emph{*-regular ring} is a regular *-ring in which the involution is proper, i.e., $x = 0$ in it whenever $x^*x = 0$ (or, equivalently, whenever $xx^* = 0$). An involution  ring is *-regular if and only if every element $x$ in it has a Moore-Penrose inverse $\dg$ (necessary unique), which is characterized by four identities $xx\dg x = x$, $x\dg xx\dg = x\dg$, $(xx\dg)^* = xx\dg$, $(x\dg x)^* = x\dg x$.

Self-adjoint idempotents of an involution ring are called \emph{projections}. A \emph{Rickart *-ring} may be defined as a ring with involution in which the left and right annihilators of every element are principal ideals each generated by a projection (see \cite{C,B,J2}); in fact, any one of the two conditions suffices. Put in another way, this means that a *-ring is Rickart if, given $x$, we can choose projections $x\lt$, resp., $x'$ such that, for all elements $y,z$ of the ring,
\begin{equation}  \label{primes1}
yx = 0 \ \text{ iff } \ yx\lt = y,
\ \text{ resp., }\
xz = 0 \ \text{ iff } \ x'z = z .
\end{equation}
The projections are actually unique for every $x$; hence
\begin{equation}   \label{primes2}
x\ltt = 1 - x\lt, \quad x'' = 1 - x' .
\end{equation}
These identities imply that the conditions in (\ref{primes1}) are equivalent to
\begin{equation}	\label{primes3}
yx = 0 \ \text{ iff } \ yx\ltt = 0, \quad
xz = 0 \ \text{ iff } \ x''z = 0
\end{equation}
respectively. In fact, $x\lt = (x^*)'$, $x' = (x^*)\lt$ and $x\ltt = x\lt{}'$, $x'' = x'{}\lt$. A Rickart ring is unital with   $0\lt = 1 = 0'$.

For example, it follows immediately from the definitions that every *-regular unital ring is a Rickart *-ring. We may put
\[
\mbox{$x\lt := 1 - xx\dg$ and $x' := 1 - x\dg x$}:
\]
then both $x\lt$ and $x'$ are projections and the identities (\ref{primes1}) hold. Furthermore, as $e\dg = e$ for all projections $e$, we obtain (\ref{primes2}); it then follows that
\[
\mbox{$x\ltt = xx\dg$ and $x'' = x\dg x$}.
\]
Conversely, a regular Rickart *-ring is *-regular.

Recall that the idempotent elements of any unital ring form an orthomodular poset with respect their natural order given by $e \le f$ iff $ef = e = fe$. The set $P$ of projections of a Rickart *-ring is even an orthomodular lattice. Moreover, if $e$ and $f$ are projections, then, $e \le f$ iff $ef = e$ iff $fe = e$, and  $ef = 0$ iff $fe = 0$. The orthocomplement of $e$ in $P$ is given by $1 - e$ and is therefore represented by both $e\lt$ and $e'$. It follows that $e \le f$ iff $ef\lt = 0$ iff $fe' = 0$.

\begin{remark} \label{hilb}
It is known well that every ring $B(H)$ may be regarded as a particular Rickart *-ring. To see this, one has to interpret in an appropriate way not only involution (as usual, $A^*$ is the adjoint of the operator $A$) and both prime operations (see below), but also multiplication. In the ring product $ab$, the left multiplier $a$ is normally considered as the first, and the right one, $b$, as the second multiplier. To treat an operator product $AB$ in the same way, one has to regard operators from $B(H)$ as operating in $H$ on the right (as in Example 1 of \cite{C})---otherwise the product $AB$ has to be represented in an abstract ring as $ba$. Neither action is the usual practice; so, to be consistent with the other papers discussed in Introduction and Section \ref{ord}, we assume here that a ring product $ab$ is transliterated into the operator notation as $AB$, and conversely.

Then, for $A \in B(H)$, $A\lt$ is the projection onto the orthogonal complement of $\ran A$ (i.e., onto $\ker A^*$, $A'$ is the projection onto $\ker A$, $A\ltt$ is the projection $P_A$, and $A''$ is the projection onto the orthogonal complement of $\ker A$ (i.e., the projection $P_{A^*}$).
\end{remark}

We now list a number of elementary properties of Rickart *-rings. According to \cite[Lemma 3.2]{C} or \cite[Lemmas 2.4, 2.5]{MRD},
\begin{gather}
a^*a = a^*b \text{ iff } a = a\ltt b \text{ iff } a = pb \text{ for some projection } p, \label{lstar1} \\
aa^* = ba^* \text{ iff } a = ba'' \text{ iff } a = bq \text{ for some projection } q.  \label{rstar1}
\end{gather}
in any Rickart *-ring.
The right-hand relationships stated in the subsequent proposition were obtained in \cite[Proposition 2.4]{C}. The proofs given below for (a)--(d) are more straightforward.

\begin{proposition} \label{doubles}
In a Rickart *-ring
\begin{enumerate}
\thitem{a} $a\lt a = 0$, \quad $aa' = 0$,
\thitem{b} $a\ltt a = a$, \quad $aa'' = a$,
\thitem{c} $(ab)\ltt \le a\ltt$, \quad $(ab)'' \le b''$,
\thitem{d} $(ab)\ltt = (ab\ltt)\ltt$, \quad $(ab)'' = (a''b)''$,
\thitem{e} if $e \le a\ltt$, then $(ea)\ltt = e$, \quad if $e \le a''$, then $(ae)'' = e$,
\thitem{f} the subsets $\{e\colo ea = 0\}$ and $\{e\colo ae = 0\}$ are sublattices of $P$ for every $a$.
\end{enumerate}
\end{proposition}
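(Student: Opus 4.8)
My plan is to prove the left-hand identity in each of (a)--(e) and then read off its right-hand companion for free from the involutive duality between the two prime operations. Since $x\lt = (x^*)'$ and $x' = (x^*)\lt$ (so that $x\ltt = (x^*)''$ and $x'' = (x^*)\ltt$), and since $(ab)^* = b^*a^*$, applying $*$ to a left-hand identity with suitably chosen arguments produces the corresponding right-hand one; for instance the left part of (c) applied to $b^*a^*$ reads $(b^*a^*)\ltt \le (b^*)\ltt$, which is precisely the right part $(ab)'' \le b''$. Parts (a) and (b) are then immediate: $a\lt a = 0$ follows by taking $y = a\lt$ in (\ref{primes1}) and using that $a\lt$ is idempotent, and then $a\ltt a = (1-a\lt)a = a$ by (\ref{primes2}) and (a).

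For (c) I would show $a\lt(ab)\ltt = 0$, whence $a\ltt(ab)\ltt = (ab)\ltt$ and therefore $(ab)\ltt \le a\ltt$ in $P$. This is short: $a\lt(ab) = (a\lt a)b = 0$ by (a), so $a\lt$ lies in the left annihilator of $ab$, and (\ref{primes3}) converts $a\lt(ab) = 0$ into $a\lt(ab)\ltt = 0$. For (d) the idea is that $ab$ and $ab\ltt$ have the same left annihilator, hence the same (unique) annihilating projection $(ab)\lt = (ab\ltt)\lt$, and so $(ab)\ltt = (ab\ltt)\ltt$. Concretely, for any $y$ put $c = ya$; then $y(ab) = cb$ and $y(ab\ltt) = cb\ltt$, and (\ref{primes3}) gives $cb = 0$ iff $cb\ltt = 0$, so the two annihilators coincide.

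Part (e) is where the real work lies. Assume $e \le a\ltt$. The inequality $(ea)\ltt \le e$ is free from (c), since $e\ltt = e$. For the reverse inclusion, write $p := (ea)\ltt$; by (b) we have $p(ea) = ea$, and $p \le e$ gives $pe = p$, so $p(ea) = pa$ and hence $pa = ea$, i.e. $(e-p)a = 0$. Here $e-p$ is again a projection, so (\ref{primes3}) yields $(e-p)a\ltt = 0$. Finally, because $p \le e \le a\ltt$ we have $ea\ltt = e$ and $pa\ltt = p$, so $(e-p)a\ltt = e - p$; comparing the two computations forces $e = p$. The delicate point, and the main obstacle, is exactly this last step: one must feed the hypothesis $e \le a\ltt$ into the relation $(e-p)a = 0$ through (\ref{primes3}) to upgrade $(ea)\ltt \le e$ to equality, and the conclusion genuinely needs $e \le a\ltt$.

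For (f) I would simply identify the two sets as principal ideals of the lattice $P$. By (\ref{primes1}), for a projection $e$ one has $ea = 0$ iff $ea\lt = e$, that is, iff $e \le a\lt$; thus $\{e \in P \colo ea = 0\} = \{e \in P \colo e \le a\lt\}$, the down-set of $a\lt$. A down-set determined by a single element is automatically closed under meet and join (if $e,f \le a\lt$ then $e \vee f \le a\lt$ and $e \wedge f \le a\lt$), hence is a sublattice; dually $\{e \in P \colo ae = 0\} = \{e \in P \colo e \le a'\}$. This part is routine once the reformulation is in place.
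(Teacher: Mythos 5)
Your proposal is correct, and its overall skeleton (reduce everything to (\ref{primes1})--(\ref{primes3}), the uniqueness of the annihilating projections, and the characterization of $\le$ on projections) matches the paper's; the main difference is one of organization, plus a genuine divergence at item (e). Cosmetically, you prove the left-hand identities and transport them to the right-hand ones via $x\lt = (x^*)'$, $x\ltt = (x^*)''$ and $(ab)^* = b^*a^*$, whereas the paper proves the right-hand ones directly and leaves the mirror case to the same symmetry; this is equivalent. For (d), you show that $ab$ and $ab\ltt$ have the same left annihilator and invoke uniqueness of $(ab)\lt$, while the paper instead verifies the two inequalities $(ab)'' \le (a''b)''$ and $(a''b)'' \le (ab)''$ by chains of equivalences through (\ref{primes3}); your version is arguably the more conceptual of the two. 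The real point of comparison is (e), which you describe as ``where the real work lies'' and prove by a direct computation: setting $p := (ea)\ltt$, deriving $(e-p)a = 0$, pushing this through (\ref{primes3}) to get $(e-p)a\ltt = 0$, and using $p \le e \le a\ltt$ to conclude $e = p$. This is correct, but the paper's structuring makes (e) a one-line corollary of (d): if $e \le a\ltt$ then $ea\ltt = e$, so $(ea)\ltt = (ea\ltt)\ltt = e\ltt = e$ --- exactly the same ``equal annihilators'' shortcut you yourself exploited in (d), applied once more. So nothing in your argument fails; you simply did by hand in (e) what your own lemma (d) already gives for free, which is the one place where the paper's route is strictly more economical.
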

\begin{proof}
We shall consider only the ``right'' case.
\begin{enumerate}
\thitem{a} By (\ref{primes1}), as $a'$ is idempotent.
\thitem{b} By (\ref{primes2}) and (a), $aa'' = a(1 - a') = a$.
\thitem{c} $(ab)'' \le b''$ iff $(ab)''b' = 0$ iff $abb' = 0$ (see (\ref{primes3}) and (a)).
\thitem{d} Likewise $(ab)'' \le (a''b)''$ iff $(ab)''(a''b)' = 0$ iff $ab(a''b)' = 0$ iff $a''b(a''b)'= 0$ and $(a''b)'' \le (ab)''$ iff $(a''b)''(ab)' = 0$ iff $a''b(ab)'= 0$ iff $ab(ab)' = 0$.
\thitem{e} If $a''e = e$, then, by (d), $(ae)'' = (a''e)'' = e'' = e$.
\thitem{f} For $ae = 0$ iff $a'e = e$ iff $e \le a'$; of course, every initial segment of $P$ is a sublattice.
\end{enumerate}
\end{proof}

\section{One-sided star orders in rings}
\label{ord}

In \cite{LPT}, the following pair of definitions of left and right star partial order in a *-regular ring $R$ is used:
\begin{eqnarray}
a \lle b &\text{iff}& a^*a = a^*b \text{ and } aR \subseteq bR, \label{L1}     \\
a \rle b &\text{iff}& aa^* = ba^* \text{ and } Ra \subseteq Rb. \label{L2}
\end{eqnarray}
Standard calculations show that
\begin{equation}    \label{RR}
 aR \subseteq bR \text{ iff } a = bx \text{ for some } x, \quad Ra \subseteq Rb \text{ iff } a = yb \text{ for some } y ;
\end{equation}
therefore (\ref{L1}) and (\ref{L2}) are in fact the abstract analogues of (\ref{D1'}) and (\ref{D2'}).

The definitions (\ref{L1}) and (\ref{L2}) can be rewritten in terms of Rickart prime operations. First, due to (\ref{lstar1}) and (\ref{rstar1}), involution can be eliminated. Further, as the ring $R$ above is supposed to be *-regular, the two equivalences in (\ref{RR}) my be specified as follows:
\begin{equation*}
 aR \subseteq bR \text{ iff } bb\dg a = a, \quad Ra \subseteq Rb \text{ iff } ab\dg b = a .
\end{equation*}
Equivalently,
\begin{equation}    \label{RR'}
 aR \subseteq bR \text{ iff } b\ltt a = a, \quad Ra \subseteq Rb \text{ iff } ab'' = a .
\end{equation}
We thus come to the following conclusion concerning the one-sided star partial orders introduced by (\ref{L1}) and (\ref{L2}).

\begin{theorem} \label{rrr}
In a regular Rickart *-ring,
\[
\mbox{$a \lle b$ \;iff\; $a\ltt b = a = b\ltt a$,
\quad
$a \rle b$ \;iff\; $ba'' = a = ab''$}.
\]
\end{theorem}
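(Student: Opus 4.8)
The statement is really a synthesis of characterizations already assembled in the preceding paragraphs, so my plan is simply to substitute them into the two defining conditions (\ref{L1}) and (\ref{L2}), treating each defining conjunct---the involutive equation and the range inclusion---separately.

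First I would handle the left-star order. By (\ref{L1}), $a \lle b$ means $a^*a = a^*b$ together with $aR \subseteq bR$. The first conjunct is, by (\ref{lstar1}), equivalent to $a = a\ltt b$, and the second is, by the left-hand equivalence in (\ref{RR'}), equivalent to $b\ltt a = a$. Conjoining the two rewritten conditions yields exactly $a\ltt b = a = b\ltt a$. The right-star order is entirely symmetric: by (\ref{L2}), $a \rle b$ means $aa^* = ba^*$ together with $Ra \subseteq Rb$; here (\ref{rstar1}) turns the first conjunct into $a = ba''$ and the right-hand equivalence in (\ref{RR'}) turns the second into $ab'' = a$, giving $ba'' = a = ab''$.

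There is no genuine difficulty in the computation itself; all the content sits in the lemmas invoked, and the argument is pure substitution. The one point I would take care to verify is that these ingredients are actually available in the stated setting. The equivalences (\ref{lstar1}) and (\ref{rstar1}) hold in every Rickart *-ring, but (\ref{RR'}) was derived via the Moore--Penrose inverse and so presupposes *-regularity. This causes no trouble here: as recalled in Section \ref{prelim}, a regular Rickart *-ring is automatically *-regular, so (\ref{RR'}) applies and the substitution is legitimate.
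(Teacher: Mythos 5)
Your proposal is correct and follows essentially the same route as the paper, which derives Theorem \ref{rrr} in the text preceding it by combining (\ref{lstar1}), (\ref{rstar1}) with (\ref{RR'}), the latter obtained via Moore--Penrose inverses under *-regularity. Your explicit check that regularity of a Rickart *-ring yields *-regularity (so that (\ref{RR'}) is available) is exactly the point the paper relies on.
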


The second identity in each equivalence can be further modified using the following easy consequences of (\ref{primes1})--(\ref{primes3}):
\begin{gather}
b\ltt a = a \;\text{ iff }\; b\lt a = 0 \;\text{ iff }\; b\lt a\ltt = 0 \;\text{ iff }\; b\ltt a\ltt = a\ltt \;\text{ iff }\; a\ltt \le b\ltt, \label{lstar2}
\\
ab'' = a \;\text{ iff }\; ab' = 0 \;\text{ iff }\; a''b' = 0 \;\text{ iff }\; a''b'' = a''  \;\text{ iff }\; a'' \le b''. \label{rstar2}
\end{gather}

The authors of \cite{MRD} provide a definition of a left-star partial order in a unitary involution ring which is an abstract algebraic analogue of (\ref{dmg1}), and then state in Theorem 9 that it is equivalent, in a Rickart *-ring, to
\begin{eqnarray}
a \lle b &\text{ iff }& %pb = a = bq \text{ for some projection } p \text{ and idempotent } q \label{mrd1} \\
    a^*a = a^*b \text{ and } a = bq \text{ for some idempotent } q \label{mrd1} \\
& & \mbox{such that, for every $x$, $qx = 0$ iff $ax = 0$} \nonumber.
\end{eqnarray}
The right-star partial order $\rle$ is introduced and analogously. They also prove that $\lle$ and $\rle$ are indeed partial orders in a Rickart *-ring (Theorem 10 in \cite{MRD}). Further, in a regular Rickart *-ring (i.e., a *-regular ring), these definitions of $\lle$ and $\rle$ are shown in \cite[Theorem 14]{MRD} to be equivalent to the following ones (in the notation of the present paper):
\begin{eqnarray}
a \lle b &\text{ iff }& a^*a = a^*b \text{ and } b\lt a = 0,  \label{M1'} \\
a \rle b &\text{ iff }& aa^* = ba^* \text{ and } ab'= 0 . \label{M2'}
\end{eqnarray}
By (\ref{lstar1}), (\ref{rstar1}) and (\ref{lstar2}),
(\ref{rstar2}), it is now evident that (\ref{M1'}), (\ref{M2'}) are equivalent to the pair of characteristics of the relations $\lle$, $\rle$ given in Theorem \ref{rrr}. (It follows that the relations $\lle$ and $\rle$ in Theorem \ref{rrr} and, hence, in (\ref{L1}), (\ref{L2}) also are partial orders; this was not explicitly stated in \cite{LPT}).

The characteristics of one-sided orders given in Theorem \ref{rrr} make sense in an arbitrary  Rickart *-ring; however, they need not hold true there. The reason is that the equivalences (\ref{RR}) generally (i.e., without regularity) do not follow from (\ref{RR'}). This observation motivates introducing of a new pair of relations.
\begin{definition} \label{CC}
The left (right) star partial orders $\lcle$ and $\rcle$ on a Rickart *-ring are defined by
\[
a \lcle b \text{ iff } a\ltt b = a = b\ltt a,
\quad
a \rcle b \text{ iff } ba'' = a = ab''.
\]
\end{definition}

Observe that, due to (\ref{lstar1}), (\ref{rstar1}), (\ref{lstar2}), (\ref{rstar2}) and in virtue of the last paragraph of the Remark \ref{hilb},
\[
\mbox{
$a \lcle b$ iff $a^*a = a^*b$ and $a\ltt \le b\ltt$,
\quad
$a \rcle b$ iff $aa^* = ba^*$ and $a'' \le b''$};
\]
these equivalences are the abstract analogues of (\ref{C1'}) and (\ref{C2'}). They go back to \cite[Remark 2]{C}; however, (i) as it was assumed in \cite{C} that operators in a Hilbert space operate on the right, the left and right order were interchanged there to fit with this assumption, (ii) by an elementary fault in reasoning, both equivalences were presented there as algebraizations of (\ref{D1}), (\ref{D2}) rather than (\ref{C1}), (\ref{C2}).

\begin{theorem}
The relations $\lcle$ and $\rcle$ are partial orders.
\end{theorem}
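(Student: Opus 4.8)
The plan is to check the three defining properties of a partial order for $\rcle$ and then transfer everything to $\lcle$ by an involution argument. For the transfer I would first record that $a \lcle b$ holds exactly when $a^* \rcle b^*$: using $a\lt = (a^*)'$ and $a' = (a^*)\lt$ one computes $(a^*)'' = a\ltt$, and then the two defining equations of $a^* \rcle b^*$ become, after taking adjoints (the relevant primes being projections, hence self-adjoint), the two defining equations of $a \lcle b$. Since $x \mapsto x^*$ is an involutive bijection of the ring, it is an isomorphism between the relational structures $(R,\lcle)$ and $(R,\rcle)$, so it suffices to prove that $\rcle$ is a partial order. Throughout I would use Proposition~\ref{doubles}, the chain of equivalences (\ref{rstar2}), and the elementary facts about the projection lattice recorded in Section~\ref{prelim}, in particular that for projections $e,f$ one has $e \le f$ iff $ef = e$ iff $fe = e$.

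Reflexivity and antisymmetry are both quick. The relation $a \rcle a$ reduces to the single equation $aa'' = a$, which is exactly Proposition~\ref{doubles}(b). For antisymmetry, assume $a \rcle b$ and $b \rcle a$; the equation $a = ab''$ coming from $a \rcle b$ and the equation $ab'' = b$ coming from $b \rcle a$ together give $a = b$ immediately.

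The real content is transitivity. Assume $a \rcle b$ and $b \rcle c$, that is, $ba'' = a = ab''$ and $cb'' = b = bc''$, and aim for $ca'' = a = ac''$. The second equation is easy: (\ref{rstar2}) reads $ab'' = a$ and $bc'' = b$ as $a'' \le b''$ and $b'' \le c''$, hence $a'' \le c''$, and (\ref{rstar2}) converts this back to $ac'' = a$. For the first equation I would substitute the ``left'' equations into each other,
\[
a = ba'' = (cb'')a'' = c\,(b''a'') = ca'',
\]
where the last step uses $b''a'' = a''$, itself a consequence of $a'' \le b''$.

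The one place demanding care, and the step I would single out as the potential snag, is the identity $b''a'' = a''$. Noncommutativity means $a'' \le b''$ cannot simply be read as an equation one can substitute; what rescues the computation is precisely the projection-lattice fact that $e \le f$ forces both $ef = e$ and $fe = e$, so that $b''$ can be absorbed on the left as well as on the right of $a''$. Once this is in hand the calculation is forced, and---crucially for the purpose of Definition~\ref{CC}---nowhere does the argument appeal to regularity of the Rickart *-ring.
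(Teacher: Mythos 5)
Your proof is correct and follows essentially the same route as the paper's: a direct verification of reflexivity, antisymmetry, and transitivity for $\rcle$ via the identities $aa''=a$, (\ref{rstar1})--(\ref{rstar2}) and the absorption $b''a''=a''$ coming from $a''\le b''$, with the left-star case disposed of by symmetry (the paper simply says it treats only the right case, while you make the duality $a \lcle b \Leftrightarrow a^* \rcle b^*$ explicit). Your antisymmetry argument ($a = ab'' = b$) is in fact a one-step shortening of the paper's chain $b = ab'' = ba''b'' = ba'' = a$, but the substance is identical.
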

\begin{proof}
We shall consider only the right star order, and shall use (\ref{primes1}) and (\ref{rstar1}). Evidently, the relation $\rcle$ is reflexive. It is transitive: if $a \rcle b$ and $b \rcle c$, then $ca'' = cb''a'' = ba'' = a$ and $ac'' = a b''c'' = ab'' = a$. It is also antisymmetric: if $a \rcle b$ and $b \rcle a$, then $b = ab'' = ba''b'' = ba'' = a$.
\end{proof}

%\stepcounter{remark}
\begin{remark}
Definition \ref{CC} makes sense even in an ordinary Rickart ring; however, some additional (involution-free) assumptions are necessary to assure equivalences (\ref{lstar1}) and (\ref{rstar1}), which are needed for the above theorem. We shall discuss this generalization in another paper.
\end{remark}

\section{$\rcle$-order structure of Rickart *-rings}	
\label{latt}

Let $R$ be some Rickart *-ring. In this section, we shall deal only with the right star partial order $\rcle$ on it.

The subsequent proposition is an analogue of Lemma 3.1 in \cite{C} for the star order on a Rickart *-ring. %Here, $E$ stands for the set of all idempotents of $R$.

\begin{proposition} \label{prop1}
In $R$,
\begin{enumerate}
\thitem{a}
$0$ is the least element,
\thitem{b}
the right star partial order coincides on $P$ with the usual order of idempotents,
\thitem{c}
$a \in P$ if and only if $a \rcle 1$,
\thitem{d}
every left invertible element is maximal.
\end{enumerate}
\end{proposition}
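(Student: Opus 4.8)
The plan is to treat all four items by unwinding Definition \ref{CC}, which characterizes $a \rcle b$ as the conjunction $ba'' = a$ and $ab'' = a$, and then to evaluate the operation $x \mapsto x''$ on the few distinguished elements that occur. The facts I shall lean on are all collected in the Preliminaries: that the ring is unital with $0' = 1$, that $e' = 1 - e$ (hence $e'' = e$) for a projection $e$, and that $x'' = 1 - x'$ is a projection for every $x$. With these in hand, parts (a)--(c) reduce to short computations.

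For (a), since $0' = 1$ we get $0'' = 0$, so that $0 \rcle a$ becomes the pair of trivially true equations $a \cdot 0'' = 0$ and $0 \cdot a'' = 0$; thus $0 \rcle a$ for every $a$, and $0$ is least. For (b), given projections $e,f \in P$, I would use $e'' = e$ and $f'' = f$ to rewrite $e \rcle f$ as $fe = e$ together with $ef = e$, which is precisely the condition $e \le f$ in the natural order of idempotents recalled in the Preliminaries; so the two orders agree on $P$. For (c), noting that $1$ is a projection gives $1'' = 1$, so $a \rcle 1$ unwinds to $a'' = a$ (its companion equation $a \cdot 1'' = a$ being automatic); since $a''$ is always a projection, $a'' = a$ holds exactly when $a$ itself lies in $P$.

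For (d), I would first observe that a left-invertible element $a$, say with $ca = 1$, has trivial right annihilator, because $az = 0$ forces $z = caz = 0$; hence $a' = 0$ and $a'' = 1$. Consequently, if $a \rcle b$, the defining equation $ba'' = a$ collapses to $b = a$, so nothing lies strictly above $a$ and $a$ is maximal.

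There is no real obstacle here: each claim follows immediately once the value of the relevant double prime is known, and the evaluations of $0''$, $e''$, $1''$, and $a''$ (for left-invertible $a$) are direct from the Preliminaries. The one point worth a word of care is the converse half of (c), namely that $a'' = a$ forces $a \in P$. This is not circular, since it rests only on the general fact that $a'' = 1 - a'$ is a projection for every $a$; hence an element that equals its own double prime is automatically a projection.
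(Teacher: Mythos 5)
Your proposal is correct and follows essentially the same route as the paper: all four items are settled by unwinding Definition \ref{CC} and evaluating the double primes $0''$, $e''$, $1''$, and $a''$. The only cosmetic difference is in (d), where you obtain $a'' = 1$ by noting that a left-invertible element has trivial right annihilator (so $a' = 0$), whereas the paper cites Proposition \ref{doubles}(c) to get $1 = (ya)'' \le a''$; both are immediate and equally valid.
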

\begin{proof}
%\begin{enumerate}
%\pritem{a}
(a) evident.
\pritem{b} For $e,f \in P$, $e \rcle f$ iff $fe = e = ef$ iff $e \le f$.
\pritem{c} $a \rcle 1$ iff $a'' = a$. Now notice that the right-hand equality holds if and only if $a$ is a projection.
\pritem{d} If $ya = 1$, then $1 = (ya)'' \le a''$ (Proposition \ref{doubles}(c)) and $a'' = 1$, Now, if $a \rcle z$, then  $z = a$.
%\end{enumerate}
\end{proof}

The following theorem is the central result in this section.
\begin{theorem}
Let $x$ be any element of $R$. The mappings $\phi\colo [0,x] \to P$ and $\psi_x\colo [0,x''] \to R$, defined by
\[
\phi(a) := a'', \quad \psi_x(e) := xe,
\]
are mutually inverse and establish an order isomorphism between the initial segments $[0,x] \subseteq R$ and $[0,x''] \subseteq P$.
\end{theorem}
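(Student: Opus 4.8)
The plan is to treat the statement in three stages: first confirm that both maps land in the claimed intervals, then that they are mutually inverse, and finally that each is order preserving; the last two facts together give the order isomorphism. Throughout I would use that $[0,x]=\{a\colo a\rcle x\}$ (since $0$ is least by Proposition \ref{prop1}(a)) and that on $P$ the order $\rcle$ is the usual projection order by Proposition \ref{prop1}(b), so $[0,x'']=\{e\in P\colo e\le x''\}$.

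For well-definedness, if $a\rcle x$ then Definition \ref{CC} gives $ax''=a$, whence $a''\le x''$ by (\ref{rstar2}); as $a''$ is always a projection, $\phi(a)=a''\in[0,x'']$. In the other direction, take a projection $e\le x''$. The crucial computation is the normalization $(xe)''=e$: by Proposition \ref{doubles}(d), $(xe)''=(x''e)''$, and $x''e=e$ because $e\le x''$ in $P$, so $(xe)''=e''=e$, using $e''=e$ for projections (Proposition \ref{prop1}(c)). With this, $x(xe)''=xe$ and $(xe)x''=x(ex'')=xe$ (again $ex''=e$), so $xe\rcle x$ by Definition \ref{CC}, i.e.\ $\psi_x(e)\in[0,x]$.

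The mutual-inverse property then falls out of the work already done. The identity $(xe)''=e$ gives $\phi(\psi_x(e))=(xe)''=e$ directly. In the opposite direction, $\psi_x(\phi(a))=xa''$, and this equals $a$ by the very definition of $a\rcle x$, which includes $xa''=a$. Hence $\phi$ and $\psi_x$ are inverse bijections between $[0,x]$ and $[0,x'']$.

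It remains to check monotonicity in both directions. If $a\rcle b$ in $[0,x]$ then $ab''=a$, so $a''\le b''$ by (\ref{rstar2}), i.e.\ $\phi(a)\le\phi(b)$. Conversely, for projections $e\le f$ in $[0,x'']$ I would verify $xe\rcle xf$ straight from Definition \ref{CC}, using $(xe)''=e$, $(xf)''=f$ together with $fe=e=ef$: one gets $(xf)(xe)''=xfe=xe$ and $(xe)(xf)''=xef=xe$, so $\psi_x(e)\rcle\psi_x(f)$. Thus both maps are monotone, and being mutually inverse they form an order isomorphism. I expect the only step needing genuine care to be the normalization $(xe)''=e$ for $e\le x''$, since every other part --- the codomain checks, the two inverse relations, and both halves of monotonicity --- reduces to it combined with the defining equations of $\rcle$ and the elementary projection identities $e''=e$ and $fe=e=ef$ for $e\le f$.
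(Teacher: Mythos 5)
Your proof is correct and follows essentially the same route as the paper's: both hinge on the defining equations of $\rcle$, on (\ref{rstar2}), and on the key identity $(xe)''=e$ for $e\le x''$ (which you re-derive from Proposition \ref{doubles}(d) exactly as the paper's proof of Proposition \ref{doubles}(e) does, rather than citing (e) directly). Your separation into well-definedness, mutual inverses, and two-sided monotonicity is just a slightly more explicit organization of the same argument.
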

\begin{proof}
If $a \rcle b \rcle x$, then, evidently, $\phi(a) \le \phi(b) \le x''$ (see (\ref{rstar1})) and $\psi_x(\phi(a)) = a$ (Definition \ref{CC}). In particular, $\phi$ is isotone. Similarly, if $e \le f \le x''$, then $\psi_x(e) \rcle \psi_x(f) \rcle x$. Indeed, by Proposition \ref{doubles}(e), $\psi_x(f)((\psi_x(e))'' = xf(xe)'' = xfe = xe = \psi_x(e)$  and $\psi_x(e)(\psi_x(f))'' = xef = xe = \psi_x(e)$; thus, $\psi_x$ is isotone, and then $\psi_x(f) \rcle \psi_x(x'') = xx'' = x$ (Proposition \ref{doubles}(b)). Further, $\phi(\psi_x(e)) = e$  (Proposition \ref{doubles}(e)); together with the similar identity stated at the beginning of the proof, this implies that $\phi$ and $\psi$ are mutually inverse. In addition, the range of $\phi$ is $[0,x'']$, and the range of $\psi_x$ is $[0,x]$.
\end{proof}

\begin{corollary}
Every initial segment $[0,x]$ with $x$ maximal is order isomorphic to $P$.
\end{corollary}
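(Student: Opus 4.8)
The plan is to derive this corollary directly from the preceding theorem, which already establishes that for every $x \in R$ the initial segment $[0,x] \subseteq R$ is order isomorphic to $[0,x''] \subseteq P$ via the mutually inverse maps $\phi(a) = a''$ and $\psi_x(e) = xe$. The only gap to fill is to show that when $x$ is maximal, the target segment $[0,x'']$ is in fact all of $P$, i.e., $x'' = 1$. Once that is in hand, the theorem gives an order isomorphism $[0,x] \cong [0,1] = P$, which is exactly the claim.

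The key step is therefore to prove that a maximal element $x$ satisfies $x'' = 1$. First I would observe that $x'' \rcle 1$: by the equivalence preceding Definition~\ref{CC} (or directly from Proposition~\ref{doubles}), $x''$ is a projection, hence $x'' \rcle 1$ by Proposition~\ref{prop1}(c). Next I would check that $x \rcle x''$. This follows from Definition~\ref{CC}: we need $x''x'' = x$ and $x(x'')'' = x$; but $(x'')'' = x''$ since $x''$ is a projection, so both required identities reduce to $xx'' = x$, which is Proposition~\ref{doubles}(b). Thus $x \rcle x'' \rcle 1$. Now maximality of $x$ forces $x'' = x$, and since $x''$ is a projection this means $x$ itself is a projection with $x = x''$. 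Combined with $x \rcle 1$ and maximality again, we would get $x = 1$, whence $x'' = 1$ trivially.

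An alternative and cleaner route avoids splitting into cases: since $x \rcle x'' \rcle 1$, maximality of $x$ gives $x'' = x$ and $1 = x$, so in particular $x'' = 1$. The isomorphism from the theorem then reads $[0,x] \cong [0,x''] = [0,1] = P$, completing the proof.

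The main obstacle, such as it is, is purely bookkeeping: confirming that $x \rcle x''$ holds for \emph{every} $x$ so that maximality can be invoked, since a priori maximality only tells us about elements that $x$ dominates in the $\rcle$ order, not about arbitrary upper bounds. The chain $x \rcle x'' \rcle 1$ is what makes the argument work, and verifying each link from Definition~\ref{CC} and Proposition~\ref{doubles}(b),(c) is the crux; everything else is an immediate appeal to the theorem. I expect no genuine difficulty beyond ensuring these elementary identities are cited correctly.
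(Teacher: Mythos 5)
Your argument has a fatal error at the first link of the chain $x \rcle x'' \rcle 1$: the claim that $x \rcle x''$ holds for every $x$ is false. Unwinding Definition~\ref{CC} with $a = x$ and $b = x''$, the two identities to verify are $x''x'' = x$ and $x(x'')'' = x$. The second is indeed $xx'' = x$, but the first does \emph{not} ``reduce to $xx'' = x$'': since the projection $x''$ is idempotent, $x''x'' = x''$, so the first identity asserts $x'' = x$. Hence $x \rcle x''$ holds if and only if $x$ is a projection, and your argument collapses for every non-projection $x$. Moreover, the conclusion it would yield --- that the only maximal element of $R$ is $1$ --- contradicts the paper itself: by Proposition~\ref{prop1}(d) every left invertible element is maximal, so in $\BH$, say, the element $-1$ or any unitary $u \neq 1$ is maximal; such an $x$ has $x'' = 1$, yet $x \rcle 1$ fails, because by Proposition~\ref{prop1}(c) it would force $x$ to be a projection.

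Your overall plan --- show that maximality implies $x'' = 1$, then quote the preceding theorem together with $[0,1] = P$ --- is the natural reading of the corollary, which the paper states without any proof; so the implication ``$x$ maximal $\Rightarrow x'' = 1$'' is precisely the missing content, and it is not obtainable by formal manipulation of this kind. The natural attempt is the contrapositive: from $x' \neq 0$ produce some $y \neq x$ with $x \rcle y$. But be warned that this step is where the real difficulty sits, and in fact the implication fails even in $\BH$: the backward shift $S^*$ (with $S^*e_0 = 0$, $S^*e_{n+1} = e_n$) is $\rcle$-maximal although $(S^*)'' \neq 1$. Indeed, $y(S^*)'' = S^*$ forces $y = S^* + c$ with $c(v) = \langle v, e_0\rangle\eta$ for some vector $\eta$, and if $\eta \neq 0$ then $e_0 - S\eta \in \ker y$ while $e_0 - S\eta \notin \ker S^*$, so the second condition $xy'' = x$ fails; thus no $y \neq S^*$ lies $\rcle$-above $S^*$. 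For this $x$ the conclusion of the corollary still holds, but only because the segment $[0,(S^*)'']$ of $P$ --- the projection lattice of the subspace $\{e_0\}^\perp$ --- is isomorphic to $P$ when $H$ is infinite dimensional, not because $x'' = 1$. So the gap is genuine rather than bookkeeping: the lemma your proof hinges on is false as stated, and any correct proof of the corollary must argue in a different way.
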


\begin{corollary}
Initial segments $[0,x]$ and $[0,y]$ of $R$ are order isomorphic if and only if so are the segments $[0,x'']$ and $[0,y'']$ of $P$.
\end{corollary}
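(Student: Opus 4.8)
The plan is to reduce the statement immediately to the preceding Theorem, which for every element $z \in R$ exhibits an explicit order isomorphism between the initial segment $[0,z] \subseteq R$ and the initial segment $[0,z''] \subseteq P$, namely the pair of mutually inverse maps $a \mapsto a''$ and $e \mapsto ze$. Applying this Theorem twice, once to $x$ and once to $y$, I obtain order isomorphisms $[0,x] \cong [0,x'']$ and $[0,y] \cong [0,y'']$. The whole corollary then rests on the fact that being order isomorphic is an equivalence relation on ordered sets: it is symmetric because the inverse of an order isomorphism is again an order isomorphism, and transitive because the composite of two order isomorphisms is one.

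For the forward implication I would assume that $[0,x]$ and $[0,y]$ are order isomorphic, and then chain the three isomorphisms available to me: the inverse $\psi_x$ of $\phi$ gives $[0,x''] \cong [0,x]$, the hypothesis gives $[0,x] \cong [0,y]$, and $\phi$ applied to $y$ gives $[0,y] \cong [0,y'']$; composing them produces an order isomorphism $[0,x''] \cong [0,y'']$. The converse is entirely symmetric: starting from an order isomorphism $[0,x''] \cong [0,y'']$, I conjugate it by the two isomorphisms of the Theorem to obtain $[0,x] \cong [0,x''] \cong [0,y''] \cong [0,y]$. Thus each direction is a single application of transitivity once the Theorem has been invoked.

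I do not expect any genuine obstacle here: the substantive content is carried entirely by the Theorem, and what remains is only the routine observation that order isomorphisms compose and invert. The one point that deserves a word of care is to keep the two ambient orders distinct, since $[0,x]$ and $[0,y]$ live in $(R,\rcle)$ whereas $[0,x'']$ and $[0,y'']$ live in $(P,\le)$; but the Theorem already bridges these two settings, so no additional argument is required.
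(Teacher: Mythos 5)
Your proof is correct and is exactly the argument the paper intends: the corollary is stated without proof precisely because it follows from the preceding Theorem by composing the isomorphisms $\psi_x$, the given isomorphism, and $\phi$ (for $y$), together with symmetry and transitivity of order isomorphism. Nothing further is needed.
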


We can say more on the order structure of initial segments.

\begin{corollary}
Every interval $[0,x]$ is an orthomodular lattice. Moreover, if $a,b \in [0,x]$, then
\begin{enumerate}
\thitem{a} $x(a'' \wedge b'')$ is the meet of $a$ and $b$ in $[0,x]$,
\thitem{b} $x(a'' \vee b'')$ is the join of $a$ and $b$ in $[0,x]$,
\thitem{c}
$x - a$ is the orthocomplementation of $a$ in $[0,x]$.
\end{enumerate}
\end{corollary}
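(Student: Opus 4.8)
The plan is to transport the entire structure across the order isomorphism supplied by the central theorem of this section, namely $\phi\colo [0,x] \to [0,x'']$ with $\phi(a) = a''$ and inverse $\psi_x(e) = xe$. Since $P$ is an orthomodular lattice and $[0,x'']$ is one of its principal ideals, $[0,x'']$ is itself an orthomodular lattice; an order isomorphism carries this structure back to $[0,x]$, which is therefore an orthomodular lattice as well. It then remains only to express the three operations of $[0,x]$ in terms of the ring.

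For the lattice operations I would first note that meets and joins in the interval $[0,x'']$ agree with those computed in $P$: if $a'' , b'' \le x''$, then $a'' \wedge b'' \le a'' \le x''$ is still the meet inside the interval, and $a'' \vee b'' \le x''$ since $x''$ is an upper bound of both. Hence the isomorphism delivers the meet of $a,b$ in $[0,x]$ as $\psi_x(\phi(a) \wedge \phi(b)) = x(a'' \wedge b'')$ and the join as $\psi_x(a'' \vee b'') = x(a'' \vee b'')$, giving (a) and (b); both automatically lie in $[0,x]$ because $\psi_x$ is the inverse isomorphism. For the orthocomplementation in (c) I would use that, for projections $a'' \le x''$, the relative orthocomplement of $a''$ in $[0,x'']$ is $x'' - a''$, which is again a projection and satisfies $a''(x'' - a'') = 0$ and $a'' + (x'' - a'') = x''$. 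Transporting by $\psi_x$ gives the orthocomplement of $a$ in $[0,x]$ as $x(x'' - a'') = xx'' - xa''$; by Proposition \ref{doubles}(b), $xx'' = x$, and by Definition \ref{CC} (as $a \rcle x$), $xa'' = a$, so this reduces to $x - a$, as claimed.

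The one delicate point is the orthocomplementation, since an orthomodular lattice can carry several orthocomplementations and so the complement is not determined by the order alone. This is resolved by observing that the orthomodular structure on $[0,x]$ is obtained by transport in the first place, so that the orthocomplement of $a$ is by definition $\psi_x$ applied to the orthocomplement of $a''$ in $[0,x'']$; the real content of (c) is then the algebraic reduction $x(x'' - a'') = x - a$, together with the verification that $x'' - a''$ is indeed the relative complement of $a''$ in $[0,x'']$. The lattice identities (a) and (b) are routine by comparison.
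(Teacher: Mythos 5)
Your proposal is correct and follows essentially the same route as the paper: both transport the orthomodular lattice structure of the segment $[0,x'']\subseteq P$ (with relative orthocomplement $x''-a''$) back to $[0,x]$ via the isomorphism $\phi,\psi_x$ of the preceding theorem, and both reduce $\psi_x(x''-a'')=xx''-xa''=x-a$ using Proposition~\ref{doubles}(b) and Definition~\ref{CC}. Your extra remark that the orthocomplementation is the one obtained by transport (rather than determined by the order) is a fair clarification of what the paper leaves implicit, but it does not change the argument.
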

\begin{proof}
Recall that $P$ is an orthomodular lattice. Hence, every segment $[0,e]$ in $P$ also is a lattice, which also is orthomodular with the orthocomplementation of $f$ given by $f^\perp_e = e - f$. Therefore, any interval $[0,x]$, being an order-isomorphic copy of $[0,x'']$, is an orthomodular lattice with  the join of $a$ and $b$ given by $\psi_x(\phi(a) \vee \phi(b))$, the meet of $a$ and $b$ given by $\psi_x(\phi(a) \wedge \phi(b))$, and the orthocomplementation of $a$ given by $\psi_x(x'' - \phi(a))$ (see Proposition \ref{doubles}(b) and Definition \ref{CC}).
\end{proof}

Let $\rcwedge$ and $\rcvee$ stand for the join and meet operation respectively in $R$ (with respect to $\rcle$). It easily follows from items (c) and (d) of Proposition \ref{prop1} that, generally, at least the operation $\rcvee$ is partial, for $1$ is an example of a left invertible element of $R$. We can adjust to the present context the Theorem 4.2 of \cite{C} which describes the meet and the join of a bounded pair of elements in a Rickart *-ring under the star order.

\begin{theorem}
If $a,b \rcle x$, then
\begin{enumerate}
\thitem{a}  $a \rcwedge b$ exists and equals to $x(a'' \wedge b'')$,
\thitem{b} $a \rcvee b$ exists and equals to $x(a'' \vee b'')$.
\end{enumerate}
\end{theorem}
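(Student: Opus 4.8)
The plan is to upgrade the \emph{relative} meet and join computed inside the interval $[0,x]$---which the preceding corollary identifies as $x(a''\wedge b'')$ and $x(a''\vee b'')$---to genuine meet and join in the whole poset $R$. The two parts are of very different difficulty: part (a) is essentially free, while part (b) requires showing that no upper bound lying outside $[0,x]$ can beat the candidate.

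For part (a) I would first note that transitivity of $\rcle$ forces every lower bound of $\{a,b\}$ in $R$ to lie already in $[0,x]$: if $c\rcle a$ and $a\rcle x$, then $c\rcle x$. Hence the set of common lower bounds of $a$ and $b$ in $R$ coincides with the one computed inside $[0,x]$, and its greatest element is, by the corollary, $x(a''\wedge b'')$. This settles the meet with no further computation.

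For part (b) I would argue in two steps. First, $x(a''\vee b'')$ is an upper bound of $a$ and $b$ in $R$: since $a\rcle x$ and $b\rcle x$ give $a=xa''=\psi_x(a'')$ and $b=\psi_x(b'')$, and since $a'',b''\le a''\vee b''$, isotonicity of $\psi_x$ yields $a,b\rcle \psi_x(a''\vee b'')=x(a''\vee b'')$. Second, it is the \emph{least} upper bound. Given any upper bound $c$ (so $a\rcle c$, $b\rcle c$), the relations $ac''=a$, $bc''=b$ give $a''\le c''$ and $b''\le c''$ (see (\ref{rstar2})), whence $e:=a''\vee b''\le c''$; note also $e\le x''$ since $a'',b''\le x''$. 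Writing $d:=x-c$ and using $xa''=a=ca''$ together with $xb''=b=cb''$, one gets $da''=0=db''$.

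The decisive point is then to pass from $da''=0$ and $db''=0$ to $de=0$, i.e.\ from annihilation of the two projections $a''$, $b''$ to annihilation of their join in $P$. This is exactly what Proposition \ref{doubles}(f) supplies: the set $\{f\in P\colo df=0\}$ is a sublattice of $P$, hence contains $a''\vee b''=e$, so $de=0$ and therefore $xe=ce$. Finally I would check $xe\rcle c$ straight from Definition \ref{CC}: using $(xe)''=e$ (Proposition \ref{doubles}(e), valid because $e\le x''$) and $ec''=e$ (because $e\le c''$), one obtains $(xe)c''=xe\,ec''=xe$ and $c(xe)''=ce=xe$. Thus $x(a''\vee b'')=xe\rcle c$, which proves that $x(a''\vee b'')$ is the least upper bound. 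I expect the sublattice passage to be the only genuine obstacle; everything else is routine bookkeeping with the prime operations and the isomorphism $\psi_x$.
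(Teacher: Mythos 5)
Your proof is correct and takes essentially the same route as the paper: both arguments treat (a) as immediate from the interval isomorphism, propose $x(a''\vee b'')$ as the join candidate, and hinge on the annihilator-sublattice property of Proposition \ref{doubles}(f), applied to the difference between $x$ and a competing upper bound, to pass from $da''=0=db''$ to $d(a''\vee b'')=0$. The only cosmetic difference is that you verify $x(a''\vee b'')\rcle c$ directly from the two identities in Definition \ref{CC}, whereas the paper reaches the same conclusion by forming $yc''$ and invoking the interval isomorphism for $[0,y]$.
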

\begin{proof}
A straightforward checking shows that the meet of two elements in an initial segment of a poset is also their meet in the whole poset; therefore, (a) immediately follows from the previous theorem. This may be not the case with joins: generally, the join of two elements in an initial segment may be not the least upper bound of them in the poset. Therefore, we have to examine this point in $R$ separately.

Suppose that $a,b \rcle x$ and, consequently, $a = xa''$, $b = xb''$ and $a'',b'' \le x''$. Let $c :=x(a'' \vee b'')$. Then $c$, being the join of $a$ and $b$ in $[0,x]$, is an upper bound of $a$ and $b$ in $R$. Suppose that $y$ is one more such an upper bound; then $a = ya''$, $b = yb''$ and $a'',b'' \le y''$. Hence, $d := yc'' = y(x(a'' \vee b''))'' = y(x''(a'' \vee b''))'' = y(a'' \vee b'')$; by the previous corollary, $d \in [0,y]$. Now, $c = d$: as $(x - y)a'' = 0  = (x - y)b''$, Proposition \ref{doubles}(g) implies that  $(x - y)(a'' \vee b'') = 0$. Thus, $c \rcle y$, and, $c$ is the least upper bound of $a$ and $b$, as desired in (b).
\end{proof}

\begin{corollary}
If $a$ and $b$ have an upper bound, then
$a(a'' \wedge b'') = a \rcwedge b = b(a'' \cwedge b'')$.
\end{corollary}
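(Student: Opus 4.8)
The plan is to build directly on the preceding theorem, which already furnishes the existence of $a \rcwedge b$ together with the formula $a \rcwedge b = x(a'' \wedge b'')$ for any common upper bound $x$ of $a$ and $b$. All that remains is to observe that this value is insensitive to the choice of $x$ and can be rewritten intrinsically, with $a$ (respectively $b$) in place of $x$. So first I would invoke the hypothesis: since $a$ and $b$ have an upper bound, I fix one such $x$, so that $a,b \rcle x$ and hence $a \rcwedge b$ exists and equals $x(a'' \wedge b'')$.

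The key step is a right-absorption identity for the meet of projections. Writing $m := a'' \wedge b''$ for the meet in the orthomodular lattice $P$, I would use $m \le a''$ together with the characterization of the projection order recalled in Section \ref{prelim} (for projections, $e \le f$ iff $ef = e$) to get $a''m = m$. Unfolding $a \rcle x$ through Definition \ref{CC} gives $xa'' = a$, whence
\[
a\,m = (xa'')\,m = x(a''m) = xm = x(a'' \wedge b'').
\]
This establishes $a(a'' \wedge b'') = a \rcwedge b$. The computation for $b$ is symmetric: from $b \rcle x$ we have $xb'' = b$, and from $m \le b''$ we have $b''m = m$, so $b(a'' \wedge b'') = (xb'')m = x(b''m) = xm = x(a'' \wedge b'')$, which again equals $a \rcwedge b$.

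Chaining the three equal quantities yields $a(a'' \wedge b'') = a \rcwedge b = b(a'' \wedge b'')$, as required (where $\cwedge$ in the statement denotes the same meet in $P$ as $\wedge$). I expect no genuine obstacle here: the entire substance was carried by the preceding theorem, and the only new ingredient is the absorption $a''(a'' \wedge b'') = a'' \wedge b'' = b''(a'' \wedge b'')$, which is immediate from $a'' \wedge b''$ being the lattice meet of the projections $a''$ and $b''$. The corollary therefore simply records that the meet admits an intrinsic description through either of its arguments, with no reference to an auxiliary upper bound.
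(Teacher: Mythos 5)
Your proof is correct and follows essentially the same route as the paper: substitute $a = xa''$ (from $a \rcle x$), absorb the meet via $a''(a'' \wedge b'') = a'' \wedge b''$, and invoke the preceding theorem to identify $x(a'' \wedge b'')$ with $a \rcwedge b$. The paper's own proof is exactly this one-line computation (stated ``for example'' for $a$, with $b$ symmetric), so there is nothing to add.
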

\begin{proof}
Assume that $a,b \rcle x$. Then, for example, $a(a'' \rcwedge b'') = xa''(a'' \rcwedge b'') = x(a'' \rcwedge b'') = a \rcwedge b$.
\end{proof}

\end{document}